\documentclass{article}
\usepackage{amssymb,amscd,amsthm, verbatim,amsmath,color,fancyhdr, mathrsfs}
\usepackage{tikz}
\usepackage{graphicx}
\usepackage{turnstile}
\usepackage{mathtools}
\usepackage{bm}
\usepackage{inconsolata}  
\usepackage{comment}
\usepackage{todonotes}

\usepackage{mathtools}
\usepackage[colorlinks=true,citecolor=blue]{hyperref}

\usepackage[all]{xy}
\usepackage{float}
\usepackage{bbold}
\usepackage{enumerate}
\usepackage{listings}

\newcommand{\ZZ}{\mathbb{Z}}

\DeclareMathOperator{\impro}{\mu_{int}}

\newtheorem{thm}{Theorem}[section]

\newtheorem{lemma}[thm]{Lemma}
\newtheorem{claim}{Claim}
\newtheorem{prop}[thm]{Proposition}

\theoremstyle{definition}

\newtheorem{remark}[thm]{Remark}

\usepackage[normalem]{ulem} 
\usepackage{bm}
\usepackage[mathlines]{lineno}


\newcommand*\patchAmsMathEnvironmentForLineno[1]{
	\expandafter\let\csname old#1\expandafter\endcsname\csname #1\endcsname
	\expandafter\let\csname oldend#1\expandafter\endcsname\csname end#1\endcsname
	\renewenvironment{#1}
	{\linenomath\csname old#1\endcsname}
	{\csname oldend#1\endcsname\endlinenomath}}
\newcommand*\patchBothAmsMathEnvironmentsForLineno[1]{
	\patchAmsMathEnvironmentForLineno{#1}
	\patchAmsMathEnvironmentForLineno{#1*}}
\AtBeginDocument{
	\patchBothAmsMathEnvironmentsForLineno{equation}
	\patchBothAmsMathEnvironmentsForLineno{align}
	\patchBothAmsMathEnvironmentsForLineno{flalign}
	\patchBothAmsMathEnvironmentsForLineno{alignat}
	\patchBothAmsMathEnvironmentsForLineno{gather}
	\patchBothAmsMathEnvironmentsForLineno{multline}
}

\title{The interval coloring impropriety of planar graphs}

\date{} 
\usepackage[utf8]{inputenc}
\usepackage{authblk} 

\usepackage{array}

\author{Seunghun Lee\footnote{
Department of Mathematical Sciences, KAIST(Korea Advanced Institute of Science and Technology), South Korea. \texttt{seunghun.math@kaist.ac.kr}}}

\begin{document}
	\maketitle

\begin{abstract}
For a graph $G$, we call an edge coloring of $G$ an \textit{improper} \textit{interval edge coloring} if for every $v\in V(G)$ the colors, which are integers, of the edges incident with $v$ form an integral interval.  The \textit{interval coloring impropriety} of $G$, denoted by $\impro(G)$, is the smallest value $k$ such that $G$ has an improper interval edge coloring where at most $k$ edges of $G$ with a common endpoint have the same color.

The purpose of this note is to communicate solutions to two previous questions on interval coloring impropriety, mainly regarding planar graphs. First, we prove $\impro(G) \leq 2$ for every outerplanar graph $G$. This confirms the conjecture by Casselgren and Petrosyan in the affirmative. Secondly, we prove that for each $k\geq 2$, the interval coloring impropriety of $k$-trees is unbounded. This refutes the conjecture by Carr, Cho, Crawford, Ir\v si\v c, Pai and Robinson.

\end{abstract}
\section{Introduction} \label{sec_intro}
Let $G=(V,E)$ be a graph. For a vertex $v \in V$, we denote the set of the edges of $G$ incident with $v$ by $E_v$. We call an edge coloring $c:E \to \ZZ$ of $G$, which is not necessarily proper, an \textit{improper} (or \textit{defective}) \textit{interval edge coloring} if the image $c(E_v)$ forms an integral interval for every $v\in V$. This coloring notion was investigated by Casselgren and Petrosyan \cite{impropriety_casselgren} 
as a defective version of (proper) interval edge colorings introduced by Asratian and Kamalian \cite{proper_interval_original_russian, proper_interval_original_English}. This defective coloring version was first considered by Hud\'ak, Kardo\v s, Madaras and Vrbjarov\'a \cite{improper_interval_hudak} but with a different focus from that of \cite{impropriety_casselgren} (to check the general literature on \textit{improper}, or \textit{defective}, colorings, the reader might want to consult the recent surveys \cite{defective_cowen, defective_wood}).

While the original proper interval edge coloring model was motivated by the real-world problem to determine timetables without gaps for teachers and classes, the defective coloring model considers a similar problem which allows some level of conflict. In \cite{impropriety_casselgren}, such conflict is modeled by the \textit{interval coloring impropriety} (or just \textit{impropriety}) of a graph $G$, denoted by $\impro(G)$, which is the smallest value $k$ such that $G$ has an improper interval edge coloring where at most $k$ edges of $G$ with a common endpoint have the same color, or equivalently, $|c^{-1}(i) \cap E_v|\leq k$ for any color $i \in \ZZ$ and any vertex $v\in V$. 
In the same paper, the authors provided lower and upper bounds on impropriety for various classes of graphs. The same line of study was continued in subsequent papers \cite{carr2024intervalcoloringimproprietygraphs, impropriety_complete_multipartite}.

\medskip 

The purpose of this note is to communicate solutions to two previous questions on impropriety, mainly regarding planar graphs. The first result is on outerplanar graphs. Recall that an \textit{outerplanar graph} is a graph that has a planar drawing where all vertices belong to the outerface of the drawing.

Casselgren and Petrosyan conjectured that $\impro(G) \leq 2$ for any outerplanar graph $G$ \cite[Conjecture 4.10]{impropriety_casselgren} and provided a supporting case when $\Delta(G) \leq 8$. This was later improved in \cite{carr2024intervalcoloringimproprietygraphs}. Our first main theorem confirms the conjecture in the affirmative.

\begin{thm} \label{thm_outer}
	For any outerplanar graph $G=(V,E)$, we have $\impro(G)\leq 2$. That is, there is an improper interval edge coloring $c:E\to \ZZ$ such that for any vertex $v\in V$ and any color $i\in \ZZ$, $|c^{-1}(i) \cap E_v|\leq 2$. 
\end{thm}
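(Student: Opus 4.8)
The plan is to build the coloring by induction along the $2$-connected block structure of $G$, keeping tight control of the colors around a few distinguished vertices throughout. One first reduces to connected $G$, coloring components independently. For connected $G$, root its block--cut tree arbitrarily and color the blocks top--down: when a block $B$ is reached at the cut vertex $r$ joining it to its already-colored parent, rather than interleaving colors we \emph{stack} --- translate the whole coloring of $B$ by an integer so that the colors it contributes at $r$ form a run of consecutive integers sitting immediately above the interval already present at $r$ (and above the runs of the sibling blocks previously attached at $r$). For the impropriety at $r$ to remain $\le 2$ it suffices that each block $B$ can be colored by an improper interval edge coloring of impropriety $\le 2$ in which, at one prescribed vertex $v\in V(B)$, the colors on $E_v$ are pairwise distinct and form $\{1,2,\dots,\deg_B(v)\}$: then each color of $B$'s run at $r$ is used exactly once there, so the runs concatenate into an interval with impropriety $\le 2$. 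Since every block is a single edge or a $2$-connected outerplanar graph, everything reduces to the following claim.

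\emph{Key Lemma.} Let $H$ be a $2$-connected outerplanar graph, or a single edge, with an edge $e=xy$ on its outer face. Then $H$ has an improper interval edge coloring of impropriety $\le 2$ in which, at each of $x$ and $y$, the colors are pairwise distinct and form $\{1,\dots,\deg_H(x)\}$, respectively $\{1,\dots,\deg_H(y)\}$. (For the block step, take $e$ to be an outer-face edge at the prescribed $v$.)

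I would prove the Key Lemma by induction on $|E(H)|$, with the single edge (colored $1$) as base case. In the step, let $F$ be the unique inner face incident with $e$ and write its facial cycle as $x=w_0,w_1,\dots,w_k,w_{k+1}=y$, $k\ge1$; the edges of $F$ other than $e$ are $w_0w_1,\dots,w_kw_{k+1}$. Each $w_iw_{i+1}$ is a boundary edge of $H$ or a chord; let $H_i$ be this single edge in the first case, and in the second the $2$-connected outerplanar graph cut off by the chord on the side away from $F$ (so $w_iw_{i+1}$ lies on its outer face). Non-crossing of chords in an outerplanar embedding gives that $H=\{e\}\cup H_0\cup\dots\cup H_k$, that consecutive $H_{i-1},H_i$ meet only in $w_i$ while $H_i,H_j$ are vertex-disjoint for $|i-j|\ge2$, that $x$ lies in no $H_i$ but $H_0$ and $y$ in none but $H_k$ (so $\deg_H(x)=\deg_{H_0}(x)+1$, $\deg_H(y)=\deg_{H_k}(y)+1$), and that each $H_i$ omits $e$ and hence has fewer edges than $H$. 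Now color each $H_i$ by induction with designated edge $w_iw_{i+1}$, set $c(e)=0$, and add $1$ to every color (so $e$ gets color $1$).

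The verification is then short: an internal vertex of some $H_i$ sees only the shifted colors of $H_i$, so its impropriety stays $\le 2$; at $w_i$ with $1\le i\le k$ the colors on $E_{w_i}$ are $\{2,\dots,\deg_{H_{i-1}}(w_i)+1\}\cup\{2,\dots,\deg_{H_i}(w_i)+1\}$, an interval in which each value occurs at most once on either side, hence at most twice; and at $x=w_0$ the colors are the pairwise distinct set $\{2,\dots,\deg_{H_0}(x)+1\}$ together with the color $1$ of $e$, i.e.\ the interval $\{1,\dots,\deg_H(x)\}$ with all colors distinct (and likewise at $y$). This is exactly the invariant, so the induction closes, and combining it with the block--cut stacking gives $\impro(G)\le2$. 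The point that needs to be set up exactly right is this invariant: safety of the two-piece merge at $w_1,\dots,w_k$ rests on each incident piece using every color \emph{at most once} there --- mere impropriety $\le2$ would allow four occurrences after merging --- so ``pairwise distinct'' must be propagated at \emph{both} ends of the designated edge, and the normalization ``$c(e)=0$ then shift by $1$'' is what lets $\{1,\dots,\deg_{H_0}(x)\}$ and $\{1,\dots,\deg_{H_k}(y)\}$ each gain a new smallest color at once (extending them upward would instead demand the unavailable $\deg_{H_0}(x)=\deg_{H_k}(y)$). I expect this invariant-setup, together with carefully deriving the decomposition $H=\{e\}\cup H_0\cup\dots\cup H_k$ from the planar structure, to be the only real obstacle; the rest, including the consistency of the block--cut translations, is routine bookkeeping.
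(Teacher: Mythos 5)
Your proposal is correct, but it reaches the theorem by a genuinely different route from the paper. The paper completes $G$ to a maximal outerplanar graph $H$, i.e.\ a triangulation of a convex polygon, takes the dual tree $T$ of the bounded faces, colors each edge by $\min_f d_T(f,f_0)$ over the faces $f$ containing it, and then argues (Case 2) that restricting this coloring back to $E(G)$ only deletes the extremal color on each of the two monotone dual paths at a vertex, so the interval property survives. You instead never complete the graph: you split $G$ along its block--cut tree and prove a strengthened invariant for each $2$-connected block --- an improper interval coloring of impropriety $\le 2$ whose colors at both ends of a designated outer-face edge are pairwise distinct and equal to $\{1,\dots,\deg\}$ --- by peeling off the inner face incident with that edge and recursing into the pieces $H_0,\dots,H_k$ cut off by its chords. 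I checked the delicate points: the edge-disjoint decomposition $H=\{e\}\cup H_0\cup\dots\cup H_k$ with consecutive pieces meeting only in $w_i$ does follow from non-crossing of chords; the merge at each $w_i$ of two runs both starting at color $2$ is an interval with each value used at most twice precisely because of the ``pairwise distinct'' half of your invariant; and the downward extension by the new color $1$ at $x$ and $y$ restores the invariant at both ends, so the induction closes, and the block-stacking at cut vertices is sound. What each approach buys: the paper's coloring is a one-line explicit formula and the whole proof fits in a page, at the price of the auxiliary completion and the restriction argument; your version carries a stronger inductive statement and more bookkeeping, but handles non-maximal and non-$2$-connected outerplanar graphs natively and makes the ``each color used at most once per side'' mechanism fully explicit. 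The two are cousins under the hood: in your scheme an edge's color is one plus the recursion depth at which it becomes a designated edge, which for a triangulation is essentially the dual-tree distance that drives the paper's formula.
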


Note that Theorem \ref{thm_outer} gives an infinite family of planar graphs with bounded impropriety.

\smallskip

The second result is on $k$-trees. We first define a $k$-tree by recursion: $K_{k+1}$ is $k$-tree, and a $k$-tree on $n>k+1$ vertices is obtained by joining a new vertex to any $k$ pairwise adjacent vertices of a $k$-tree on $n-1$ vertices. In a particular case when $k=2$, note that every maximal outerplanar graph, which forms a triangulation of a polygon, is a 2-tree, and every 2-tree is planar.

In \cite[Conjecture 13]{carr2024intervalcoloringimproprietygraphs}, Carr, Cho, Crawford, Ir\v si\v c, Pai and Robinson conjectured that $\impro(G)\leq 2$ for any 2-tree $G$ after providing some supporting cases. In the same paper, the authors further asked at \cite[Question 27]{carr2024intervalcoloringimproprietygraphs} if the impropriety of a $k$-tree can bounded by a function on $k$. The following theorem refutes their conjecture on 2-trees and gives a negative answer to their general question on $k$-trees.

\begin{thm} \label{thm_2-tree}
Let $k\geq 2$ be a fixed positive integer. For any positive integer $N$, there is a $k$-tree $G$ with $\impro(G) \geq N$. 
\end{thm}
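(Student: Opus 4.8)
The plan is to exhibit, for each fixed $k\ge 2$ and each target $N$, an explicit $k$-tree $G=G_{k,N}$ produced by a recursive \emph{clique-attachment} (iterated ``book'') construction, and then to argue that the $k$-tree structure of $G$ rules out any improper interval edge coloring of small impropriety. Concretely, I would build $G$ in $N$ rounds: start from $K_{k+1}$; in round $i$, select a prescribed collection of the $k$-cliques currently present and, for each such clique, attach $m_i$ brand-new vertices, each joined to all $k$ vertices of that clique. Each such attachment is exactly the defining step of a $k$-tree, so $G$ stays in the class, and the new $k$-cliques created in round $i$ serve as the attachment sites of round $i+1$, so that $G$ ends up containing many nested books whose spines interlock. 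The multiplicities $m_1\ll m_2\ll\cdots\ll m_N$ are to be fixed at the very end. Unlike a triangulated polygon (a maximal outerplanar $2$-tree, whose triangles form a tree), this graph is far from outerplanar once the depth exceeds one, which is precisely the feature that Theorem~\ref{thm_outer} shows cannot be exploited in the outerplanar setting.

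For the lower bound, suppose for contradiction that $G$ admits an improper interval edge coloring $c$ with impropriety $t<N$. I record the three facts available for any such coloring: writing $I_v=[\ell(v),r(v)]$ for the integral interval $c(E_v)$, we have (i) $c(uv)\in I_u\cap I_v$ for every edge $uv$; (ii) both endpoints $\ell(v)$ and $r(v)$ are colors of actual edges at $v$, so $v$ has a neighbour $u$ with $\ell(u)\le\ell(v)$ and one with $r(u')\ge r(v)$; and (iii) $|c^{-1}(i)\cap E_v|\le t$ for all $v,i$, hence $|I_v|\ge\lceil\deg(v)/t\rceil$. The core of the argument is a ``confinement'' estimate propagated through the rounds: I would show, by induction on the round (forward or backward, whichever keeps the bookkeeping cleanest), that the vertices created in round $i$ are forced to have short, tightly positioned intervals, with length controlled by a function $L_i(t)$ depending only on $t$ and $k$; this in turn forces the vertices of an attachment clique used in round $i+1$ to have all their incident edges squeezed into a palette of size $O(L_i(t))$, so by (iii) the impropriety is at least roughly $m_{i+1}/L_i(t)$. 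Choosing the $m_i$ to grow fast enough relative to the finitely many quantities $L_i(t)$, $t<N$, forces some vertex of $G$ to have impropriety $\ge N$, contradicting $t<N$.

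The main obstacle — and the step I expect to need the real idea — is making the confinement genuinely \emph{amplify}. A single book only forces the two spine intervals to nearly coincide and forces long intervals at high-degree vertices; on its own this yields a bounded impropriety (indeed $2$, as in the outerplanar case), since a long interval does not prevent a vertex from spreading its edges across many colors. So the recursion must be arranged so that either the confinement window genuinely shrinks or the forced impropriety strictly increases with each new round, rather than merely transferring a constraint between vertices. I would isolate this as a self-contained gadget lemma: a $k$-tree gadget $H_d$ with a designated port-clique such that in any improper interval coloring of any host graph containing $H_d$, with the port vertices having no further neighbours (or a controlled number), either the impropriety is at least $d$, or the port vertices have intervals of length at most $L_d(t)$ with $L_d$ growing slowly in $d$; the recursive step then builds $H_{d+1}$ by book-attaching many copies of $H_d$ along a common new clique, using the inductive property of $H_d$ to confine the new clique's edges and push the forced impropriety from $d$ to $d+1$. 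Once such a gadget lemma is in hand, checking that the parameters $m_i$ can be chosen consistently (depending on the $L_i(t)$ for all relevant $t$) and that the tower terminates is a routine, if slightly tedious, calculation.
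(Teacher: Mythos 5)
Your plan is not yet a proof: the entire lower bound rests on a ``gadget lemma'' that you state only as a desideratum and explicitly flag as the step where ``the real idea'' is still needed. As written, nothing in the proposal establishes that the confinement actually amplifies from one round to the next --- you yourself observe that a single book attachment only yields bounded impropriety, and you do not supply the mechanism by which iterating the attachment escapes this. Until $L_d(t)$ is defined and the inductive step from $H_d$ to $H_{d+1}$ is proved, the argument has no content beyond the (correct) construction of a $k$-tree and the (correct but standard) facts (i)--(iii) about interval colorings. So there is a genuine gap, and it is exactly where you said it would be.

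It may help to know that the paper shows no recursive amplification is needed; a one-level construction already works, and the point you worried about (``a long interval does not prevent a vertex from spreading its edges across many colors'') is circumvented by a counting argument rather than by shrinking windows. Take a star $K_{1,m}$ with center $v_0$, subdivide each edge into a path of $n$ edges, and join $k-1$ new mutually adjacent vertices $w_1,\dots,w_{k-1}$ to everything; this is a $k$-tree $T^{(k)}_{m,n}$. Every subdivision vertex has degree at most $k+1$ and $v_0$ has degree $m+k-1$, so by the elementary estimate $|c(e)-c(e')|\le\deg(v)-1$ for edges meeting at $v$ (Lemma \ref{lemma_distance}), applied along the walk $w_1v_0v_1^{(j)}\cdots v_i^{(j)}w_1$, every one of the $mn$ edges $v_i^{(j)}w_1$ receives a color within distance $m+(n+1)k-2$ of $c(w_1v_0)$. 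With $m=n$ that is $n^2$ edges incident to $w_1$ confined to $O_k(n)$ colors, and the pigeonhole principle forces some color to appear $\Omega_k(n)$ times at $w_1$. The decisive feature is the quadratic-versus-linear mismatch at a single high-degree vertex reached through low-degree paths --- not a tower of nested books. If you want to salvage your recursive scheme you would have to prove the gadget lemma, but I would recommend abandoning it in favor of this direct count.
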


In particular, our construction for Theorem \ref{thm_2-tree} provides a family of planar graphs with arbitrarily large impropriety. This shows a marked contrast within the class of all planar graphs regarding interval coloring impropriety.

\begin{remark}
Several constructions of bipartite graphs with unbounded impropriety (so they are not $k$-trees for $k \geq 2$) were previously
 obtained at \cite[Subsection 3.2]{impropriety_casselgren}. In fact, their construction from the so-called \textit{Hertz's graphs}, first described in \cite{hertz_graph}, gives another construction of planar graphs with unbounded impropriety and shares the same spirit with our construction at Section \ref{sec_k-tree}. 
 \end{remark}

This note is organized as follows. In Section \ref{sec_outerplanar}, we prove Theorem \ref{thm_outer}. In Section \ref{sec_k-tree}, we prove Theorem \ref{thm_2-tree}. In what follows, all graphs are assumed to be simple  unless stated otherwise.

\section{The impropriety of an outerplanar graph} \label{sec_outerplanar}

In this section we prove Theorem \ref{thm_outer}. First, we recall known facts about outerplanar graphs. An \textit{outerplanar graph} is a  graph that has a planar drawing where all vertices belong to the outerface of the drawing. An outerplanar graph $G$ is called \textit{maximal outerplanar} if no edge can be added to $G$ without losing outerplanarity. The following fact is well-known, for example, see \cite{pach_outerplanar} and \cite{pach_geometric_graph_survey}.

\begin{prop} \label{prop_triangulation_polygon}
A maximal outerplanar graph has a straight line drawing which gives a triangulation of a convex polygon.
\end{prop}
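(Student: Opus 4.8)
The plan is to prove this by induction on $n = |V|$. The base case $n = 3$ is immediate: the only maximal outerplanar graph on three vertices is $K_3$, drawn as a triangle, which is already a triangulation of a convex polygon. For $n \geq 4$ I would rely on the following classical facts about a maximal outerplanar graph $H$ on $m \geq 3$ vertices: $H$ is $2$-connected and has exactly $2m - 3$ edges (and, conversely, an outerplanar graph on $m \geq 3$ vertices with $2m-3$ edges is maximal outerplanar); in any outerplanar embedding the outer face is bounded by a Hamiltonian cycle $C_H$, which is moreover the only Hamiltonian cycle of $H$; every bounded face of such an embedding is a triangle, since otherwise a chord could be inserted inside it without destroying outerplanarity; and $H$ has at least two vertices of degree $2$ — indeed, the weak dual of $H$ is a tree on $m-2$ vertices, so it has a leaf, which corresponds to a triangular face meeting the rest of $H$ in a single edge and hence containing a degree-$2$ vertex.

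For the inductive step, choose a degree-$2$ vertex $v$ of $G$ with neighbours $u$ and $w$; then $uw \in E$ (it is the chord closing off the ear at $v$). Deleting $v$ yields an outerplanar graph $G - v$ on $n - 1 \geq 3$ vertices with $2n - 3 - 2 = 2(n-1) - 3$ edges, which is therefore maximal outerplanar, and whose Hamiltonian cycle is obtained from $C_G$ by replacing the path $u\,v\,w$ with the edge $uw$. By the inductive hypothesis $G - v$ has a straight-line drawing realizing a triangulation of a convex polygon $P'$; since the boundary of $P'$ is a Hamiltonian cycle of $G - v$ and $G-v$ has only one, the boundary of $P'$ is the image of the cycle just described, so $u$ and $w$ are consecutive vertices of $P'$ and $uw$ is a side of $P'$. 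Now I would reinsert $v$ by placing it sufficiently close to the midpoint of the segment $uw$, on the side opposite $P'$. Because the interior angles of $P'$ at $u$ and $w$ are strictly less than $\pi$, for $v$ close enough the polygon $P$ obtained from $P'$ by replacing the side $uw$ with the path $u\,v\,w$ is convex, and $v$ lies outside $P'$; adjoining the straight segments $uv$ and $vw$ to the drawing then creates exactly one new bounded region, the triangle $uvw$. The result is the desired triangulated convex polygon for $G$, which closes the induction.

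The only genuine content is this geometric reinsertion step, together with the bookkeeping that deleting an ear preserves maximal outerplanarity and carries the Hamiltonian cycle onto the polygon boundary; the structural facts invoked at the outset are standard (see \cite{pach_outerplanar, pach_geometric_graph_survey}). An alternative, non-inductive route bypasses the reinsertion lemma entirely: fix an outerplanar embedding of $G$, write its Hamiltonian outer boundary as $C = v_1 v_2 \cdots v_n$, place $v_1, \dots, v_n$ in this cyclic order at the vertices of a convex $n$-gon, and use the fact that two diagonals of a convex polygon cross as straight segments exactly when their endpoint pairs interleave along the boundary — which, for chords of $C$, happens exactly when they cross in the given plane embedding. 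Since that embedding is crossing-free, no two chords of $C$ cross in the straight-line drawing, so the drawing is plane; as $G$ has $2n - 3$ edges, this plane drawing is a full triangulation of the $n$-gon.
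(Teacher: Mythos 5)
The paper does not actually prove this proposition: it is stated as a well-known fact with pointers to the literature (\cite{pach_outerplanar}, \cite{pach_geometric_graph_survey}), so there is no "paper proof" to match yours against. Your argument is correct and self-contained modulo the standard structural facts you invoke (unique Hamiltonian outer cycle, $2n-3$ edges, existence of an ear), which are indeed classical. The induction works: deleting a degree-$2$ vertex preserves maximal outerplanarity by the edge count, uniqueness of the Hamiltonian cycle guarantees that $uw$ lands on the boundary of the inductively obtained polygon $P'$, and placing $v$ sufficiently close to the midpoint of the side $uw$ keeps all interior angles below $\pi$ (the angle at $v$ tends to $\pi$ from below and the increments at $u$ and $w$ tend to $0$), so convexity and planarity of the augmented drawing are preserved and exactly one new triangular face is created. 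Your second, non-inductive argument is in fact the cleaner of the two and is closer to the usual textbook treatment: placing the unique Hamiltonian cycle on a convex $n$-gon in its cyclic order, noting that straight chords cross iff their endpoints interleave iff the corresponding chords cross in the given outerplanar embedding, and counting $n + (n-3)$ edges to conclude the diagonals triangulate the polygon. Either version would serve as a legitimate proof where the paper simply defers to references; no gaps.
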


\begin{figure}[h]
	\centering
	\includegraphics[totalheight=4cm]{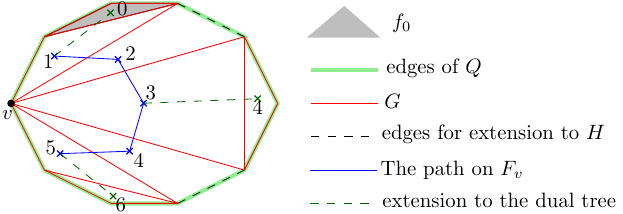}
	\caption{Illustration of the proof of Theorem \ref{thm_outer}. The numbers at each face measures the distance $d_T(f,f_0)$.
	}
	\label{fig:outerplanar}
\end{figure}

\begin{proof}[Proof of Theorem \ref{thm_outer}]
Let $G=(V,E)$ be a given outerplanar graph, and $H$ be a maximal outerplanar graph which contains $G$ as a subgraph. By Proposition \ref{prop_triangulation_polygon}, we have a straight line drawing of $H$ which gives a triangulation of a convex polygon $Q$. We find a straight line drawing of $G$ inside that drawing. Now we consider the following two separate cases.

\medskip 
	
\textbf{(Case 1) When $G$ has all the boundary edges of $Q$:} In this case, the bounded faces of the drawing of $G$ partition $Q$. Let us denote the dual graph of $G$ by $G^*$ with respect to the drawing (note that $G^*$ might have parallel edges incident with the outerface $O$). Also note that the induced subgraph of $G^*$ on the bounded faces of $G$ is a tree. We denote this subgraph by $T$. Fix a bounded face $f_0$. For every bounded face $f$, we measure the distance $d_T(f,f_0)$ in the tree $T$.
	
Let $F_v$ be the set of bounded faces which are incident with $v$. We have the following claim.

\begin{claim}
	For any vertex $v\in V$, there is exactly one face in $F_v$ which attains the minimum $\min_{f\in F_v} d_T(f, f_0)$. 
\end{claim}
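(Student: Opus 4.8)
The plan is to combine two facts: that $T$ is a tree, and that the bounded faces incident with a fixed vertex $v$ are "fanned out" linearly around $v$ because $v$ lies on the outerface of the drawing. Existence of a minimizer is immediate since $F_v$ is finite and nonempty ($v$ is a vertex of the polygon $Q$, so the two boundary edges of $Q$ at $v$ each bound a bounded face of $G$), so the whole content is uniqueness.

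\textbf{Step 1: $F_v$ induces a path in $T$.} We are in Case 1, so $G$ contains every boundary edge of $Q$ and its bounded faces partition $Q$; in particular $v$ is a vertex of $Q$. Reading off the rotation at $v$ and starting and ending at the outerface, the bounded faces incident with $v$ occur in a \emph{linear} order $g_1,g_2,\dots,g_m$ — linear rather than cyclic precisely because the outerface occupies the angular sector outside $Q$ at $v$ — and consecutive faces $g_i,g_{i+1}$ share an edge of $G$ incident with $v$. Hence $g_i$ and $g_{i+1}$ are adjacent in $G^*$, and since both are bounded they are adjacent in $T$. Thus $F_v=\{g_1,\dots,g_m\}$ spans a connected subgraph of the tree $T$; a connected subgraph of a tree that contains the spanning path $g_1g_2\cdots g_m$ is that path. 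So $F_v$ induces a path in $T$. (We will only use that $F_v$ is nonempty and induces a connected subgraph, i.e.\ a subtree, of $T$.)

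\textbf{Step 2: uniqueness.} Put $d:=\min_{f\in F_v} d_T(f,f_0)$ and suppose $g,g'\in F_v$ both attain it. Let $w$ be the last common vertex of the $T$-paths from $f_0$ to $g$ and from $f_0$ to $g'$; then $w$ lies on the $T$-path from $g$ to $g'$, which by Step 1 is contained in $F_v$, so $w\in F_v$. Since $T$ is a tree, $d_T(f_0,g)=d_T(f_0,w)+d_T(w,g)$ and $d_T(f_0,g')=d_T(f_0,w)+d_T(w,g')$. Minimality of $d$ over $F_v$ forces $d_T(f_0,w)\ge d=d_T(f_0,g)$, whence $d_T(w,g)\le 0$, i.e.\ $w=g$; symmetrically $w=g'$, so $g=g'$. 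This proves that the minimizer over $F_v$ is unique.

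\textbf{Main obstacle.} The substantive step is Step 1: the topological claim that the bounded faces at a vertex $v$ of $Q$ induce a path (a fortiori a connected subgraph) of $T$. This uses both ingredients of Case 1 in an essential way — that $v$ is on the outerface, so the rotation at $v$ is an interval and not a cycle, and that $G$ retains all boundary edges of $Q$, so no bounded region of $G$ has been merged into the outerface. Once Step 1 is in hand, Step 2 is just the standard uniqueness of nearest-point projection onto a subtree of a tree, and requires no further work.
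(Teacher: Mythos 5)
Your proof is correct and follows essentially the same route as the paper: both arguments hinge on the observation that $F_v$ induces a path in $T$ and then derive uniqueness from the uniqueness of paths in a tree (the paper concatenates the path from $f_0$ to $f_1$ with the path from $f_1$ to $f_2$ inside $F_v$, while you phrase the same idea via the median vertex $w$). Your Step 1 usefully spells out the topological fact that the paper only asserts with ``because of the drawing.''
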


There is nothing to prove when $|F_v|=1$. When $|F_v|>1$, because of the drawing, faces of $F_v$ forms a path in $T$. Suppose that the claim is not true, and let us choose the two distinct faces $f_1, f_2 \in F_v$ which attains the minimum. We cannot have $f_0 \in F_v$, since there is only one bounded face $f$ with $d_T(f,f_0)=0$, which is $f_0$. Furthermore, in the unique path in $T$ from $f_0$ to $f_1$, we cannot use any faces in $F_v \setminus \{f_1\}$, otherwise this contradicts the minimality assumption. The unique path in $T$ from $f_0$ to $f_2$ then can be obtained by concatenating the path from $f_0$ to $f_1$ and the path from $f_1$ to $f_2$ inside $F_v$. This gives a contradiction with the minimality and proves the claim.

We define the edge coloring $c:E\to \ZZ$ by $c(e)= \min_{f\in V(T),\, e\subset f} d_T(f,f_0)$. We show that this is a desired coloring. Fix a vertex $v\in V$, and choose the unique face $f_v \in F_v$ from the claim. We can partition the cycle $C_v$ induced by $F_v \cup \{O\}$ in $G^*$ (note that $O$ is the outerface and $C_v$ might consist of two parallel edges) into $P_1$ and $P_2$ where $f_v$ and $O$ are the common end points of $P_1$ and $P_2$. We also partition $E_v$ into $E_1\cup E_2$ where $E_i$ is the set of edges which have their dual edges in $P_i$ for every $i\in [2]$. Note that, by the uniqueness of a path between two vertices in a tree, the value $c(e)$ only increases from $d_T(f_v, f_0)$ where $e$ is the edge we encounter when we move from $f_v$ via $P_i$ for each $i \in [2]$. This means that for each $i \in [2]$ a color is used for $E_i$ at most once, so any color is used at most twice in $E_v$. Also, note that $c$ gives an improper interval edge coloring. This completes the proof for this case.

\medskip 

\textbf{(Case 2) When $G$ misses some edges of $Q$:} We add all the boundary edges of $Q$ to $G$ to obtain $G'=(V,E')$, and apply the argument of Case 1 for $G'$ to obtain an improper interval edge coloring $c':E'\to \ZZ$. We claim that $c=c'|_{E}$ is the desired coloring.

Again, fix $v\in V$, consider the edges in $E_v$ and compare it with $E_v'$ which is the set of edges of $E'$ incident with $v$. By the argument of Case 1, we obtain a partition $E_v'=E_1'\cup E_2'$ by following the two dual paths where for each $i \in [2]$ all the edges of $E_i'$ are uniquely colored by $c'$. The only difference between $E_v$ and $E_v'$, or $c$ and $c'$, might occur at the boundary of $Q$, or equivalently, at the edge which attains the maximum $\max_{e \in E_i'} c'(e)$ for each $i \in [2]$. Therefore, excluding some of these edges and considering the restricted coloring $c$ still gives the desired result. This completes the proof.
\end{proof}

\section{A construction of $k$-trees with unbounded impropriety} \label{sec_k-tree}
\begin{figure}[h]
	\centering
	\includegraphics[totalheight=5cm]{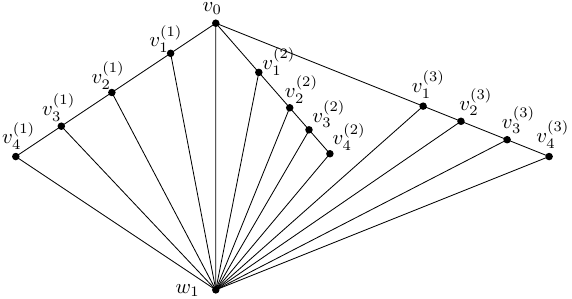}
	\caption{Illustration of $T_{3,4}^{(2)}$.
	}
	\label{fig:2-tree}
\end{figure}
In this section we prove Theorem \ref{thm_2-tree}. We first introduce our construction. For three positive integers $k\geq 2$, $m$ and $n$, let $T_{m,n}^{(k)}$ be the graph defined by
\begin{align*}
V(T_{m,n}^{(k)})=&\{w_1, \dots, w_{k-1},v_0\} \cup \{v_i^{(j)}: i\in [n],\, j \in [m] \},\textrm{ and}\\
E(T_{m,n}^{(k)})=&\{v_0v_1^{(j)}: j \in [m]\} \cup  \{v_i^{(j)}v_{i+1}^{(j)}: i\in [n-1],\, j\in [m]\}\\
&\cup \bigcup_{i \in [k-1]}\{w_ix: x \in V(T_{m,n}^{(k)})\setminus \{w_i\}\}.
\end{align*}
In other words, $T_{m,n}^{(k)}$ is constructed by first subdividing each edge of the $m$-star $K_{1,m}$, with the center vertex $v_0$, into a path consisting of $n$ edges, and then adding an edge between each   of the new vertices $w_1, \dots, w_{k-1}$ which form a clique and each of the other vertices in $V(T_{m,n}^{(k)})$. Clearly, $T_{m,n}^{(k)}$ is a $k$-tree.

Note that the essential ingredient in the argument below is to exploit the higher degree of $w_i$ for $i\in [k-1]$ (or simply just $w_1$) than the degrees of the other vertices.

\smallskip 

Before we prove Theorem \ref{thm_2-tree}, we state the following easy observation as a lemma.

\begin{lemma} \label{lemma_distance}
Let $G=(V,E)$ be a graph and $c:E \to \ZZ$ be an improper interval edge coloring. For a walk $p_0p_1\dots p_lp_{l+1}$ in $G$, we have
\[|c(p_0p_1)-c(p_lp_{l+1})|\leq \sum_{i=1}^l (\deg_G(p_i)-1).\]
\end{lemma}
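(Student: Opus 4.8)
The plan is to induct on the length of the walk, with the base case being a walk of length $1$ (i.e. $l=0$), where the claimed inequality reads $|c(p_0p_1)-c(p_0p_1)|=0 \leq 0$ and is trivially true (or alternatively take $l=1$, a single intermediate vertex $p_1$, as the genuine first case). The essential observation driving the induction is the following: at any vertex $v$, the color set $c(E_v)$ is an integral interval, so if $e$ and $e'$ are two edges both incident with $v$, then $|c(e)-c(e')| \leq |c(E_v)|-1 \leq |E_v|-1 = \deg_G(v)-1$, simply because an integral interval containing $|E_v|$ many values (counted without multiplicity, hence at most $\deg_G(v)$ of them) has diameter at most $\deg_G(v)-1$.

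Concretely, I would argue as follows. Given the walk $p_0p_1\dots p_lp_{l+1}$, consider the intermediate vertex $p_l$, which is incident with both edges $p_{l-1}p_l$ and $p_lp_{l+1}$. By the observation above applied at $v=p_l$, we get $|c(p_{l-1}p_l)-c(p_lp_{l+1})| \leq \deg_G(p_l)-1$. By the induction hypothesis applied to the shorter walk $p_0p_1\dots p_{l-1}p_l$ (which has $l-1$ intermediate vertices $p_1,\dots,p_{l-1}$), we have $|c(p_0p_1)-c(p_{l-1}p_l)| \leq \sum_{i=1}^{l-1}(\deg_G(p_i)-1)$. The triangle inequality for the absolute value then gives
\[
|c(p_0p_1)-c(p_lp_{l+1})| \leq |c(p_0p_1)-c(p_{l-1}p_l)| + |c(p_{l-1}p_l)-c(p_lp_{l+1})| \leq \sum_{i=1}^{l}(\deg_G(p_i)-1),
\]
completing the induction.

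I do not anticipate a genuine obstacle here; the only point requiring a moment of care is the bound $|c(E_v)| \leq \deg_G(v)$, which uses that $c(E_v)$ is a set (so repeated colors are not double-counted) together with $|E_v|=\deg_G(v)$ for a simple graph — this is exactly where the interval hypothesis on the coloring is used, and it is worth stating the ``interval of size $s$ has diameter $s-1$'' step explicitly. One should also note that the lemma does not require the walk to be a path, since the argument only ever looks at consecutive edges sharing a vertex, so repeated vertices cause no trouble (a repeated vertex simply contributes its term $\deg_G(p_i)-1 \geq 0$ more than once, which only weakens the bound).
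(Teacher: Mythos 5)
Your proof is correct and is essentially the same as the paper's: both rest on the observation that two edges sharing an endpoint $v$ differ in color by at most $\deg_G(v)-1$ (since $c(E_v)$ is an integral interval of at most $\deg_G(v)$ values), combined with the triangle inequality along the walk. The paper applies the triangle inequality in one telescoping sum rather than packaging it as an induction, but this is a purely cosmetic difference.
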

\begin{proof}
By the triangle inequality, we have
\begin{align}
|c(p_0p_1)-c(p_lp_{l+1})|\leq \sum _{i=1}^l |c(p_{i-1}p_i)-c(p_ip_{i+1})|. \label{ineq_proof_lemma}
\end{align}
Since $c$ is an improper interval edge coloring, for any two (not necessarily distinct) edges $e, e'\in E$ sharing a common endpoint $v \in V$, we have $|c(e)-c(e')|\leq \deg_G(v)-1$. This implies the desired conclusion from (\ref{ineq_proof_lemma}).
\end{proof}

\begin{proof}[Proof of Theorem \ref{thm_2-tree}]
Fix positive integers $k \geq 2$ and $N$. For positive integers $m$ and $n$ which are to be determined later, let us denote $T_{m,n}^{(k)}$ briefly by $G$. Let $c:E(G) \to \ZZ$ be an arbitrary improper interval edge coloring of $G$. It is sufficient to show that for suitable values of $m$ and $n$ there is a color $l \in \ZZ$ such that $|c^{-1}(l) \cap E_{w_1}|\geq N$.

\smallskip 

By translating $c$ if necessary, we can assume that $c(w_1v_0)=0$. We want to measure $|c(v_i^{(j)}w_1)|$ for every $i\in [n]$ and $j\in [m]$. Applying Lemma \ref{lemma_distance} to the walk $w_1v_0v_1^{(j)}\dots v_i^{(j)}w_1$, we have 
 \begin{align}
|c(v_i^{(j)}w_1)|=&|c(w_1v_0)-c(v_i^{(j)}w_1)|\leq (\deg_G(v_0)-1)+\sum_{p=1}^i (\deg_G(v_p^{(j)})-1)\nonumber\\
\leq& (m+k-2)+ik\leq m+(n+1)k-2 \label{bound_final}
\end{align}
for every $i\in [n]$ and $j\in [m]$.
By (\ref{bound_final}), we can conclude that for any $i\in [n]$ and $j \in [m]$, $c(v_i^{(j)}w_1)$ belongs to the integral interval $[-m-kn-(k-2), m+kn+(k-2)]$ consisting of $2m+2kn+(2k-3)$ integers. 

Now, we let $m=n$ and $E_1=\{v_i^{(j)}w_1: i,j\in [n]\}$.
Then $|E_1|=n^2$ and the edges of $E_1$ use at most $(2k+2)n+(2k-3)\leq (2k+3)n$ colors for $n \geq 2k-3$. Therefore, with a sufficiently large $n$ satisfying $\frac{n}{2k+3}\geq N$, there is a color used for at least $N$ edges in $E_1$ by the pigeonhole principle. This completes the proof.
\end{proof}
Certainly, one can use different values of $n$ and $m$ to have the same desired conclusion. We did not try to find the smallest $T_{m,n}^{(k)}$ which satisfies the conclusion.

\section*{Acknowledgement}
The author learned the above conjectures from Eun-Kyung Cho at the 98th KPPY workshop held in Kyungpook National University, South Korea. He is deeply indebted to her for helpful and encouraging discussions since then. He is also grateful to the organizers of the KPPY workshop series.

Last but not least, he wishes to thank Carl Johan Casselgren and Petros A. Petrosyan for their insightful comments and discussions.

\bibliographystyle{plain}
\bibliography{bibliography}

\end{document}